\newenvironment{proof}{\noindent {\bf Proof:}}{\hfill $\Box$}
\newtheorem{theorem}{Theorem}
\newtheorem{lemma}{Lemma}
\newtheorem{corollary}{Corollary}
\newtheorem{problem}{Problem}
\def\vol{\mathrm{vol}}
\def\one{\mathbb{I}}
\newcommand{\K}{\mathcal{K}}
\newcommand{\B}{\mathcal{B}}
\newcommand{\U}{\mathcal{U}}
\newcommand{\Pd}{{\mathcal P}_d}
\newcommand{\unif}[1]{ {\mathbb{U}}_{#1} }
\title{\bf Uniform sample generation\\ in semialgebraic sets}
\begin{document}

\author{Fabrizio Dabbene$^1$, Didier Henrion$^{2,3,4}$, Constantino Lagoa$^{5}$}

\footnotetext[1]{CNR-IEIIT; c/o Politecnico di Torino; C.so Duca degli Abruzzi 24, Torino; Italy. {\tt fabrizio.dabbene@polito.it}}
\footnotetext[2]{CNRS; LAAS; 7 avenue du colonel Roche, F-31400 Toulouse; France. {\tt henrion@laas.fr}}
\footnotetext[3]{Universit\'e de Toulouse; LAAS; F-31400 Toulouse; France.}
\footnotetext[4]{Faculty of Electrical Engineering, Czech Technical University in Prague,
Technick\'a 2, CZ-16626 Prague, Czech Republic.}
\footnotetext[5]{Electrical Engineering Department, The Pennsylvania State University, University Park, PA 16802, USA. {\tt lagoa@engr.psu.edu}}

\date{Draft of \today}

\maketitle

\begin{abstract}
We propose efficient techniques for generating independent identically distributed uniform random samples inside
semialgebraic sets. 
The proposed algorithm leverages recent results on the approximation of indicator functions by polynomials 
to develop acceptance/rejection based sample generation algorithms with guaranteed performance in terms of rejection rate (the number of samples that 
should be generated in order to obtain an accepted sample).
Moreover, the  {acceptance} rate is shown to be is asymptotically optimal, in the sense that it tends to one (all samples accepted) 
as the degree of the polynomial approximation increases.
The performance of the proposed method is illustrated by a numerical example.
\end{abstract}

\section{Introduction}

Generating independent uniformly distributed samples over ``simple,'' sets such as $\ell_p$ norms, has been thoroughly  studied and algorithms are available which efficiently solve this problem; e.g., see~\cite{TeCaDa:13}. However, efficient generation of  independent  {uniformly distributed} samples over general sets remains a difficult problem.  { This especially true in the case where the set is of low volume, hard to localize and/or  nonconvex or even disconnected.}

Although an interesting problem on its own, efficient sample generation can be used for solving complex analysis and design problems. Examples of these are chance-constrained optimization,  robust optimization  and multi-objective controller design,
see e.g. \cite{TeCaDa:13} and the many references therein.

In this paper, we present some preliminary results that aim at solving the sample generation problem over semialgebraic sets with non-empty interior. More precisely, we leverage recent results on polynomial  approximation of indicator functions of sets defined by polynomial inequalities~\cite{DabHen:13} to develop algorithms that
\begin{enumerate}[i)]

\item Generate independent uniformly distributed samples over the given semialgebraic set;

\item Have  provable bounds on sample rejection rate;

\item Have a rejection rate tends to zero as the degree of the polynomial approximation of the indicator function increases.

\end{enumerate}

The problem of random sample generation has been the subject of active research. Techniques for univariate generation techniques are discussed e.g.\ in \cite{Devroye:86}. These methods, however, are not readily extendable to the sets usually encountered in robust control. Hence, specific techniques  for generating uniform (or radially symmetric) samples in the
$\ell_{p}$ vector norm ball are discussed in \cite{TeCaDa:05}.
In the papers \cite{CalDab:02} and  \cite{CaDaTe:00}
methods for random sample generation in real and complex spectral norm balls are developed; see also \cite{ZhoFen:06}
for generation of matrices having a Toeplitz structure. 
The generation of causal stable dynamic operator has been the subject of different studies.
In \cite{ShcDab:11} various techniques for generating random samples inside the Schur stability region are discussed, while  \cite{SzLaMa:05} presents an algorithm for approximately generating uniform transfer functions in the $\mathcal{RH}_{\infty}$ ball.

We remark that the methods discussed in this paper are non-asymptotic,
contrary to the Markov chain Monte Carlo techniques discussed for instance 
in \cite{Bollobas:97,Lovasz:96} and references therein. In these papers,  the desired distribution
is obtained by simulating a random walk on a graph. and the independence and uniformity properties are only
guaranteed asymptotically. 
Therefore, the methods discussed in this paper can be implemented on
parallel and distributed architectures, see e.g.\ \cite{Fujimoto:00}.

The paper outline is as follows.
Section~\ref{sec:statement} provides a precise definition of the problem addressed in this paper and describes some auxiliary results needed for the development of the proposed approach. The algorithm for sample generation is described in Section~\ref{sec:unifgen}.  {Since this algorithm requires generation of samples from a distribution with polynomial density, details on how one can do this efficiently are given in Section~\ref{sec:pol_sample}.} In Section~\ref{sec:examples}, we illustrate the performance of the proposed approach with a few academic examples. Finally, in Section~\ref{sec:conclusion}, concluding remarks are presented and further research directions are delineated.

\section{Problem statement}
\label{sec:statement}

Consider a compact basic semialgebraic set described by polynomial inequalities
\begin{equation}
\label{Kset}
\K:=\{x \in {\mathbb R}^n : g_i(x) \geq 0, \:i=1,2,\ldots,m\}
\end{equation}
where $g_i(x)$, $i=1,\ldots,m$ are given real multivariate polynomials.
The problem we consider is the following:\\

\begin{problem} {\it Given the semialgebraic  set $\K$ defined in (\ref{Kset}),
generate $N$ independent identically distributed (i.i.d.) random samples $x^{(1)},\ldots,x^{(N)}$ uniformly distributed in $\K$.}
\end{problem}

Note that the considered setup is very general, and encapsulates many problems of interest to the control community. In particular, the algorithm presented in this paper can be used to generate  uniform samples
in the solution set of linear matrix inequalities (LMIs). Indeed, it is a well-known fact that LMI sets are (convex)
basic semialgebraic sets.
To see this, consider the LMI set
\[
\{x \in {\mathbb R}^n : F(x)=F_{0}+F_{1}x_{1}+\cdots +F_{n}x_{n}\succeq 0\}
\]
where $\succeq 0$ stands for positive semidefinite and the matrix $F(x)$
has size $m\times m$.
A vector $x$ belongs to the LMI set if and only if all the principal minors of $F(x)$ are nonnegative. This immediately leads to a set 
of $m$ polynomial inequalities in $x$.

\subsection{Preliminaries}
We define by $\Pd$ the vector space of multivariate real polynomials in $n$ variables of degree less than or equal to $d$.
The uniform density over a set $\K$ of nonzero volume is defined as
\begin{equation}
\unif{\K}(x) := \frac{\one_\K(x)}{\vol(\K)}, \label{eq:uniform}
\end{equation}
where $\one_\K(x)$ denotes the indicator function of the set $\K$
\[
\one_\K(x)=
\begin{cases}
1 & \text{if } x\in \K\\
0 & \text{otherwise,} 
\end{cases}
\]
and $\vol(\K)$ is the Lebesgue measure (volume) of $\K$; e.g., see  \cite{Halmos:50} for details on Lebesgue measures and integration. 
The idea at the basis of the method we propose is to find a suitable approximation of the set $\K$, using the framework introduced in
\cite{DabHen:13}. To this end, let us consider a polynomial of degree $d$
\[
p(x)=\sum_{\alpha \in {\mathbb N}^n, |\alpha|\leq d} p_{\alpha} \pi^{\alpha}(x)
\]
where the sum ranges over all integer vectors of size $n$ summing up to $d$ or less.
Let us now introduce the  polynomial super-level set
\[
\U(p):=\{x \in {\mathbb R}^n : p(x) \geq 1\}.
\]
The use of super-level sets as efficient nonconvex approximations of generic semialgebraic sets has been proposed in \cite{DabHen:13}.
In particular, the following optimization problem is considered 
\begin{equation}\label{optp}
\begin{array}{rcll}
v^*_d & := &\displaystyle \min_{p \in \Pd} & \vol\:{\U}(p) \\
&& \mathrm{s.t.} & \K \subseteq {\U}(p).
\end{array}
\end{equation}
The above problem amounts at finding the super-level set that better approximates, in terms of minimum volume, the original set $\K$.
Since $\K$ is compact by assumption,  for problem (\ref{optp})
to have a finite minimum, it was further assumed in  \cite{DabHen:13} that a compact semialgebraic set
$\B$  is given such that $\U(p) \subset \B$ and hence
\[
\U(p)=\{x \in \B : p(x) \geq 1\}.
\]

In this paper, we additionally assume that the set $\B$ is the cartesian product of $n$ one-dimensional sets,
i.e.\ $\B=\B_{1}\times\cdots\times\B_{n}$. For instance, the set $\B$ can be taken as the $n$-dimensional 
 hyper-rectangle 
 \[
\B_{[a,b]}:=\{x \in {\mathbb R}^n : a_i\leq x_i \leq b_i, \: i=1,2,\ldots,n \}.
\]
As noted in \cite[Remark 1]{CePiRe:12}, an outer-bounding box $\B$ of a given semialgebraic set $\K$ can be found
by solving relaxations of the following polynomial optimization problems
 \begin{eqnarray*}
a_{i} = \arg\min_{x\in\mathbb{R}^{n}} x_{i} \text{ s.t. } x \in\K,\quad i = 1,...,n,\\
b_{i} = \arg\max_{x\in\mathbb{R}^{n}} x_{i} \text{ s.t. } x \in\K,\quad i = 1,...,n,
\end{eqnarray*}
which compute the minimum and maximum value of each component of the vector $x$ over the semialgebraic set $\K$.
Note that arbitrarily tight lower and upper bounds can be obtained by means of the 
the techniques discussed e.g.\ in \cite{Lasserre:01,CGTV:03,Parrilo:03}
based on SOS/moment convex relaxations.

The algorithm presented in this paper leverages  some recent results 
presented in \cite{DabHen:13}, which, as a side-result, provide an optimal polynomial approximation of indicator functions.
More precisely, it was shown in that paper that problem  (\ref{optp})
can be approximated by the following convex optimization problem
\begin{equation}\label{l1}
\begin{array}{rcll}
w^*_d & := & \displaystyle \min_{p \in \Pd} &\displaystyle \int_\B p(x)dx \\
&& \mathrm{s.t.} & p \geq 1 \:\:\mathrm{on}\:\: \K\\
&&&p \geq 0 \:\:\mathrm{on}\:\: \B.
\end{array}
\end{equation}
In particular, the following result holds. see \cite[Lemma 1]{DabHen:13}.
\vskip 3mm
\begin{lemma}\label{cvg}
The minimum of problem (\ref{l1}) monotonically
converges from above to the minimum of problem (\ref{optp}), i.e.
$w^*_{d-1} \geq w^*_d \geq v^*_d$ for all $d$, and
$\lim_{d\to\infty} w^*_d = \lim_{d\to\infty} v^*_d$.
\end{lemma}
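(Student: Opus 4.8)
The plan is to establish the three assertions of Lemma~\ref{cvg} in sequence: first the lower bound $w^*_d \geq v^*_d$, then the monotonicity $w^*_{d-1} \geq w^*_d$, and finally the limit statement, which is where the real work lies.

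First I would show $w^*_d \geq v^*_d$ by a feasibility/comparison argument. Let $p$ be any polynomial feasible for~(\ref{l1}), so $p \geq 1$ on $\K$ and $p \geq 0$ on $\B$. Then $\K \subseteq \U(p)$, so $p$ is feasible for~(\ref{optp}). Moreover $\one_{\U(p)}(x) \leq p(x)$ for every $x \in \B$: on $\U(p)$ this reads $1 \leq p(x)$, which holds by definition of the super-level set, and off $\U(p)$ it reads $0 \leq p(x)$, which holds since $p \geq 0$ on $\B$. Integrating over $\B$ gives $\vol\,\U(p) = \int_\B \one_{\U(p)}(x)\,dx \leq \int_\B p(x)\,dx$. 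Taking the infimum over feasible $p$ on both sides yields $v^*_d \leq w^*_d$. The monotonicity $w^*_{d-1} \geq w^*_d$ is then immediate from $\Pd[d-1] \subseteq \Pd$: every polynomial feasible for the degree-$(d-1)$ problem is feasible for the degree-$d$ problem with the same objective value, so enlarging the feasible set can only decrease the minimum. (I should also note why the minima are attained, or simply phrase the monotone statements in terms of infima and cite the existence argument from \cite{DabHen:13}.)

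The main obstacle is the limit statement $\lim_d w^*_d = \lim_d v^*_d$. Both sequences are monotone nonincreasing and bounded below (by $\vol\,\K$, since any feasible super-level set contains $\K$), hence both limits exist; call them $w^*_\infty \geq v^*_\infty \geq \vol\,\K$. By the sandwich $w^*_d \geq v^*_d$ it suffices to prove $w^*_\infty \leq v^*_\infty$, and in fact it suffices to show $w^*_\infty \leq \vol\,\K$, i.e.\ that one can find, for every $\varepsilon > 0$, a polynomial $p$ feasible for~(\ref{l1}) of some degree with $\int_\B p \leq \vol\,\K + \varepsilon$. The natural route is an approximation-theoretic one: approximate $\one_\K$ from above by a continuous function and then the continuous function from above by a polynomial. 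Concretely, since $\K$ is compact one can build a continuous $f$ with $f \geq 1$ on $\K$, $0 \leq f \leq 1$ on $\B$, and $\int_\B f \leq \vol\,\K + \varepsilon/2$ (take $f$ supported on a small neighbourhood of $\K$ and use outer regularity of Lebesgue measure). By the Stone–Weierstrass theorem there is a polynomial $q$ with $\sup_\B |q - f| \leq \delta$; then $p := (q + \delta)/(1 - \delta)$ — for $\delta$ small — satisfies $p \geq f/(1-\delta) \geq 1$ on $\K$, $p \geq 0$ on $\B$, and $\int_\B p \leq (\int_\B f + 2\delta\,\vol\,\B)/(1-\delta) \leq \vol\,\K + \varepsilon$ once $\delta$ is chosen small enough. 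This exhibits a feasible $p$ for~(\ref{l1}) of some finite degree with objective at most $\vol\,\K + \varepsilon$, so $w^*_\infty \leq \vol\,\K$, and combined with $v^*_\infty \geq \vol\,\K$ and $w^*_\infty \geq v^*_\infty$ we get $w^*_\infty = v^*_\infty = \vol\,\K$.

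Since the lemma is quoted as \cite[Lemma 1]{DabHen:13}, the cleanest writeup simply reproduces the comparison argument for the inequalities and refers to \cite{DabHen:13} for the convergence, but I would include the Stone–Weierstrass sketch above for self-containedness. The one technical point to be careful about is the regularity step producing $f$: one needs $\vol(\partial\K) $ to play no role, which is fine because we only ever squeeze from \emph{above} (we never need a lower approximation of $\one_\K$), so no hypothesis on the boundary of $\K$ beyond compactness is required. $\Box$
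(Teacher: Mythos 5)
Your argument is correct, but note that the paper itself contains no proof of this lemma: it is quoted from \cite[Lemma 1]{DabHen:13}, and the only justification in the text is the subsequent remark that the optimal polynomials $p_d^*$ converge in $L^1(\B)$ to $\one_{\K}$. Your writeup therefore supplies what the citation leaves out, and it follows essentially the route of the cited reference: the chain $w^*_{d-1}\geq w^*_d\geq v^*_d$ from the nesting ${\mathcal P}_{d-1}\subseteq\Pd$ together with the pointwise bound $\one_{\U(p)}(x)\leq p(x)$ on $\B$ for any $p$ feasible in (\ref{l1}), and the limit via squeezing $\one_{\K}$ from above first by a Urysohn-type continuous function supported on a small open neighbourhood of $\K$ (using outer regularity of Lebesgue measure and compactness of $\K$), then by a polynomial via Stone--Weierstrass on the compact box $\B$. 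Your version actually proves more than the stated lemma, namely that the common limit equals $\vol(\K)$; this is consistent with, and in fact explains, the $L^1$-convergence remark the paper makes after the lemma. Two cosmetic points for a final writeup: (i) as you already flag, either phrase the monotone inequalities in terms of infima or justify attainment of the minimum in (\ref{l1}), since the feasible set, while closed and convex, is not obviously bounded in $\Pd$; (ii) the rescaling $p=(q+\delta)/(1-\delta)$ is unnecessary --- $q+\delta$ is already feasible because $q+\delta\geq f$ on $\B$, hence $\geq 1$ on $\K$ and $\geq 0$ on $\B$.
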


In particular, the convergence follows from the  fact that the optimal polynomial $p_{d}^{*}$
solution to problem (\ref{l1}) converges in $L^1(\B)$, or equivalently almost uniformly in $\B$,
to the indicator function $\one_{\K}(x)$ as its degree goes to infinity.
This crucial property is the one we will exploit in Section \ref{sec:unifgen} to construct an efficient rejection algorithm
for generating uniform samples in $\K$.

\section{Uniform generation}\label{sec:unifgen}
From the proof of \cite[Lemma 1]{DabHen:13} it can be seen that, for any $d$, the  optimal solution $p_{d}^{*}(x)$ to problem  (\ref{l1})    has the property of being an
\textit{upper approximation} of the indicator function $\one_{\K}(x)$, that is $p_{d}^{*}(x)\ge\one_{\K}(x)$ for all $x\in\B$. Moreover, this approximation becomes tight when $d$ goes to infinity.

Therefore,  this polynomial is a ``dominating density'' of the uniform density $\unif{\K}(x)$ for all $x\in\B$, that is there exists a value $\beta$ such that $\beta p_{d}^{*}(x)\ge \unif{\K}(x)$ for all $x\in\B$.
Hence, the rejection method 
from a dominating density, discussed for instance in \cite[Section 14.3.1]{TeCaDa:13}, can be applied leading to the random sampling procedure described in the following algorithm.

\textsc{Algorithm 1: Uniform Sample Generation in Semialgebraic Set $\K$}

\begin{enumerate}
  \item For a given integer $d>0$, compute the solution of 
\begin{equation}\label{l1p}
\begin{array}{rcll}
p^*_d(x)  & := \displaystyle \arg& \displaystyle\min_{p \in \Pd} &\displaystyle \int_\B p(x)dx \\
&& \mathrm{s.t.} & p \geq 1 \:\:\mathrm{on}\:\: \K\\
&&&p \geq 0 \:\:\mathrm{on}\:\: \B.
\end{array}
\end{equation}
  \item Generate a random sample $\xi^{(i)}$ with density proportional to $p_{d}^{*}(x)$ over $\B$.
  \item If $\xi^{(i)}\not\in\K$ go to step 1.
  \item Generate a sample $u$ uniform on $[0,\,1]$.
  \item If $u\,p_{d}^{*}(\xi^{{i}})\le 1$ return $x^{(i)}=y$, else go to step 1.
\end{enumerate}

\textsc{End of Algorithm 1}

It must be noticed that problem (\ref{l1p}), even though convex and finite-dimensional,
can be very hard to solve. In practice, we solve a tractable LMI problem by strenghtening
the polynomial positivity constraints by polynomial SOS constraints. Convergence
results are however not affected, see \cite{DabHen:13} for more details.

A graphical interpretation of the algorithm  is provided in Figure \ref{fig:oneD}, for the 
case of a simple one-dimensional set 
\[
\K = 
\left\{x\in\mathbb{R}\, : \,
 (x-1)^2-0.5 \ge 0,
x-3\le 0 \right\}.
\]
First, problem (\ref{l1}) is solved (for $d=8$ and $\B=[1.5,\,4]$), yielding the optimal solution
\begin{eqnarray*}
\lefteqn{p_{d}^{*}(x) =
0.069473 x^{8}
      -2.0515 x^{7}+
       23.434 x^{6}
       -139.5 x^{5}+}\\
&&       477.92 x^{4}
      -961.88 x^{3}+
       1090.8 x^{2}
      -606.07 x+
       107.28.
\end{eqnarray*}
As it can be seen, $p_{d}^{*}(x)$ is ``dominating'' the indicator function $\one_{\K}(x)$ for all $x\in\B$.

Then, uniform random samples are drawn in the hypograph of $p_{d}^{*}$. This is done
 by generating uniform samples $\xi^{{i}}$ distributed according to a probability density function (pdf)
proportional to $p_{d}^{*}$ (step 2), 
and then selecting its vertical coordinate uniformly in the interval $[0,\, \xi^{{i}}]$ (step 3).

Finally, if this sample falls below the indicator function $\one_{\K}(x)$ (blue dots) it is accepted, otherwise it is rejected 
(red dots) and the process starts again.

\begin{figure}[ht!]
\centerline{
\includegraphics[width=.6\textwidth]{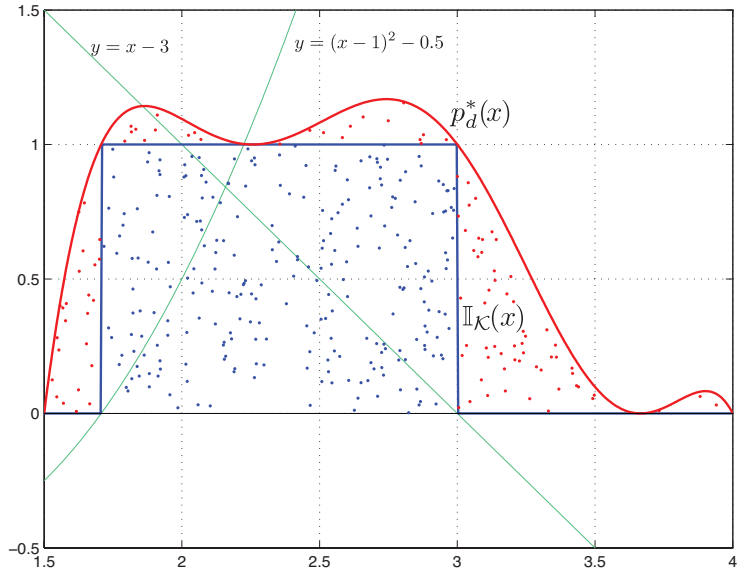}}
\caption{Illustration of the behavior of Algorithm  1  in the one-dimensional case. Blue dots are accepted samples, red dots
are rejected samples. \label{fig:oneD}}
\end{figure}

It is intuitive that this algorithm should outperform classical rejection from the bounding set $\B$, since ``more importance''
is given to the samples inside $\K$, and this importance is weighted by the function $p_{d}^{*}$.

To formally analyze Algorithm 1, we define the
\textit{acceptance rate} (see e.g.\ \cite{Devroye:97}) as one over the  expected number
of samples that have to be drawn from $p_{d}^{*}(x)$ in order
to find one ``good" sample, that is a sample uniformly distributed in $\K$. 
The following results, which is the main theoretical result of the paper, provides the acceptance rate of the proposed algorithm.

\begin{theorem}
Algorithm 1 returns a sample uniformly distributed in $\K$. Moreover, the acceptance rate 
of the algorithm is given by 
\[
\gamma_{d}=\frac{\vol(\K)}{w^*_d},
\]
where $w^*_d$ is the optimal solution of problem (\ref{l1}), that is
\[
w^*_d:=\int_\B p^*_d(x)dx.
\]
\end{theorem}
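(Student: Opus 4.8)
The plan is to verify two things separately: first, that the output sample is uniformly distributed in $\K$; and second, that the acceptance rate equals $\gamma_d = \vol(\K)/w^*_d$.

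For the first part, I would invoke the standard correctness argument for rejection sampling from a dominating density. Let $q_d(x) := p^*_d(x)/w^*_d$ be the normalized probability density on $\B$ from which $\xi^{(i)}$ is drawn in step 2 (it is a valid density because $p^*_d \geq 0$ on $\B$ and $\int_\B p^*_d = w^*_d$). In steps 3--5 the candidate $\xi^{(i)}$ is kept only if $\xi^{(i)} \in \K$ \emph{and} $u \leq 1/p^*_d(\xi^{(i)})$, where $u$ is uniform on $[0,1]$ independent of $\xi^{(i)}$. Conditioned on $\xi^{(i)} = x \in \K$, the probability of acceptance in step 5 is $\min\{1, 1/p^*_d(x)\} = 1/p^*_d(x)$, since $p^*_d(x) \geq \one_\K(x) = 1$ on $\K$ — this is exactly the upper-approximation property recalled from \cite{DabHen:13}, and it is what makes the test well-defined. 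Hence the density of an accepted sample is proportional to $q_d(x)\,\one_\K(x)\,\tfrac{1}{p^*_d(x)} = \tfrac{1}{w^*_d}\,\one_\K(x)$ on $\B$; after normalization this is precisely $\one_\K(x)/\vol(\K) = \unif{\K}(x)$. So accepted samples are uniform on $\K$, and successive accepted samples are i.i.d.\ because each trial uses fresh independent randomness.

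For the acceptance rate, I would compute the unconditional probability that a single trial (one draw of $\xi^{(i)}$ together with its $u$) results in acceptance. Integrating the joint acceptance event over $x$ gives
\[
P_{\mathrm{acc}} = \int_\B q_d(x)\,\one_\K(x)\,\frac{1}{p^*_d(x)}\,dx
= \int_\K \frac{p^*_d(x)}{w^*_d}\cdot\frac{1}{p^*_d(x)}\,dx
= \frac{1}{w^*_d}\int_\K dx = \frac{\vol(\K)}{w^*_d}.
\]
Since each trial is an independent Bernoulli$(P_{\mathrm{acc}})$ experiment, the number of draws from $p^*_d$ needed to obtain one good sample is geometric with mean $1/P_{\mathrm{acc}}$, so the acceptance rate as defined (one over that expected number) is $\gamma_d = P_{\mathrm{acc}} = \vol(\K)/w^*_d$. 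One should also note $\gamma_d \in (0,1]$ because $w^*_d \geq v^*_d \geq \vol(\K)$ by Lemma \ref{cvg} (every feasible $p$ satisfies $p \geq \one_\K$, so $\int_\B p \geq \vol(\K)$), and that $\gamma_d \to 1$ as $d \to \infty$ follows from the $L^1$-convergence $w^*_d \to \vol(\K)$ stated after Lemma \ref{cvg}, which recovers the asymptotic optimality claim.

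The argument is essentially routine given the machinery already in place; the only genuinely load-bearing point — and the one I would state carefully — is that the inequality $p^*_d(x) \geq 1$ on $\K$ guarantees the step-5 test $u\,p^*_d(\xi^{(i)}) \leq 1$ has conditional success probability exactly $1/p^*_d(x)$ rather than being clipped at $1$; without feasibility of $p^*_d$ for problem (\ref{l1}) this identification, and hence the clean formula for $\gamma_d$, would fail. A minor bookkeeping remark is that Algorithm 1 as written says ``go to step 1'' on rejection, which reruns the (deterministic) optimization; this is immaterial to the probabilistic analysis since $p^*_d$ is fixed, and the expected-count computation is unaffected.
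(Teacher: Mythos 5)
Your proposal is correct and follows essentially the same route as the paper: both identify Algorithm~1 as Von Neumann rejection from the dominating density $p^*_d(x)/w^*_d$, with the domination constant $\gamma_d = \vol(\K)/w^*_d$ coming from $p^*_d \geq \one_\K$ on $\B$. The only difference is that the paper stops at the inequality $f_x(x) \geq \gamma_d\,\unif{\K}(x)$ and cites the classical result for the acceptance rate, whereas you carry out the conditional-probability computation explicitly (and additionally note $\gamma_d \leq 1$), which makes your version self-contained but not substantively different.
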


\begin{proof}
To prove the statement, we first note  that the polynomial $p_{d}^{*}(x)$ defines a density 
\[
f_x(x):= \frac{p_{d}^{*}(x)}{w^*_d}
\]
over $\K$. 
Moreover, by construction, we have $p_{d}^{*}(x)\ge\one_{\K}(x)$, and hence 
\begin{eqnarray}
\frac{1}{w^*_d\vol(\K)} p_{d}^{*}(x) &\ge&\frac{1}{w^*_d\vol(\K)} \one_{\K}(x)\nonumber\\
\frac{1}{\vol(\K)} f_x(x) &\ge& \frac{1}{w^*_d} \unif{\K}(x)\nonumber\\
f_x(x) &\ge& \gamma_{d} \unif{\K}(x) \label{eq:rejrate}.
\end{eqnarray}
Then, it can be immediately seen that Algorithm 1 is a restatement of the classical Von Neumann rejection algorithm,
see e.g.\ \cite[Algorithm 14.2]{TeCaDa:13}, whose acceptance rate is given by the value of $\gamma_{d}$ such that 
(\ref{eq:rejrate}) holds, see for instance \cite{Devroye:86}.
\end{proof}

It follows that the efficiency of the random sample generation increases as $d$ increases, and becomes optimal as $d$ goes to infinity,
as reported in the next corollary.

\begin{corollary}
Let $d$ be the degree of the polynomial approximation of the indicator function of the set $\mathcal{K}$. Then, the acceptance rate tends to one as one increases degree $d$; i.e., 
\[
\lim_{d\to\infty} \gamma_{d} = 1.
\] 
\end{corollary}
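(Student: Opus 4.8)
The plan is to derive the corollary directly from the formula $\gamma_d = \vol(\K)/w^*_d$ established in the theorem, combined with the convergence result of Lemma~\ref{cvg}. The key observation is that $\vol(\K)$ is a fixed quantity not depending on $d$, so the limiting behaviour of $\gamma_d$ is governed entirely by the limit of $w^*_d$ as $d \to \infty$.

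First I would invoke Lemma~\ref{cvg}, which tells us that $w^*_d$ decreases monotonically and $\lim_{d\to\infty} w^*_d = \lim_{d\to\infty} v^*_d$. Then I would argue that $\lim_{d\to\infty} v^*_d = \vol(\K)$. This follows because $v^*_d$ is the minimum volume of a super-level set $\U(p)$ with $p \in \Pd$ subject to $\K \subseteq \U(p)$; since $\K \subseteq \U(p)$ forces $\vol(\U(p)) \geq \vol(\K)$, we always have $v^*_d \geq \vol(\K)$, and the sequence $v^*_d$ is nonincreasing in $d$ (a degree-$d$ feasible polynomial is feasible for degree $d+1$). To see the limit actually equals $\vol(\K)$, I would use the fact, stated in the text following Lemma~\ref{cvg}, that the optimal polynomial $p^*_d$ converges to $\one_\K$ in $L^1(\B)$; since $p^*_d \geq \one_\K$ pointwise on $\B$, we get $w^*_d = \int_\B p^*_d \to \int_\B \one_\K = \vol(\K)$, and combined with $w^*_d \geq v^*_d \geq \vol(\K)$ from Lemma~\ref{cvg}, the squeeze forces $\lim_{d\to\infty} w^*_d = \vol(\K)$.

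Finally I would conclude that
\[
\lim_{d\to\infty} \gamma_d = \lim_{d\to\infty} \frac{\vol(\K)}{w^*_d} = \frac{\vol(\K)}{\lim_{d\to\infty} w^*_d} = \frac{\vol(\K)}{\vol(\K)} = 1,
\]
where interchanging the limit with the quotient is legitimate because the denominator limit $\vol(\K)$ is strictly positive (recall $\K$ has nonempty interior, hence nonzero volume).

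The only real subtlety — and the step I would be most careful about — is establishing $\lim_{d\to\infty} w^*_d = \vol(\K)$ rather than merely $\lim_{d\to\infty} w^*_d = \lim_{d\to\infty} v^*_d$. Lemma~\ref{cvg} as stated gives only the latter. The resolution is to lean on the $L^1(\B)$ convergence $p^*_d \to \one_\K$ asserted in the discussion after Lemma~\ref{cvg}, which immediately yields $\int_\B p^*_d \to \vol(\K)$; alternatively one can observe that $v^*_d \to \vol(\K)$ because one can construct, for the set $\K$ with nonempty interior, polynomials whose super-level sets approximate $\K$ arbitrarily well in volume from outside (e.g.\ via Stone--Weierstrass-type arguments on $\B$). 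Either route closes the gap, and the rest is the elementary squeeze and quotient-of-limits argument above.
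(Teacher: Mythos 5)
Your argument is correct and follows exactly the route the paper intends: the paper states the corollary without an explicit proof, relying implicitly on the remark after Lemma~\ref{cvg} that $p^*_d \to \one_\K$ in $L^1(\B)$, which gives $w^*_d = \int_\B p^*_d\,dx \to \vol(\K)$ and hence $\gamma_d = \vol(\K)/w^*_d \to 1$. Your extra care in noting that Lemma~\ref{cvg} alone only yields $\lim w^*_d = \lim v^*_d$, and in checking $\vol(\K) > 0$ before passing to the quotient of limits, only makes the argument more complete than what the paper records.
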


Hence, the trade-off is between the complexity of computing a good approximation ($d$ large) on the one hand, and 
having to wait a long time to get a ``good'' sample ($\gamma$ large), on the other hand. Note, however, that the first step can be computed off-line
for a given set $\K$, and then the corresponding polynomial $p_{d}^{*}$ can be used for efficient on-line sample generation.
 
Finally, we highlight that,  in order to apply Algorithm 1 in an efficient way (step 2), a computationally efficient scheme for generating random samples according to a polynomial density is required. This is discussed next.

\section{Generation from a polynomial density} \label{sec:pol_sample}

To generate a random sample according to the multivariate polynomial density $f_x(x)$, one can recur to the conditional density method,
see e.g.\ \cite{Devroye:86}.
This is a recursive method in which the individual entries of the multivariate samples
are generated according to their conditional probability density.
In particular, the  joint pdf of the vector of random variables $x=(x _1\, \cdots\, x _n)$ can be written as
\[
f_x (x_1,\ldots,x_n)= f_{x _1}(x_1)
f_{x _2|x_1}(x_2|x_1)\cdots f_{x _n|x_1\cdots x_{n-1}}
(x_n|x_1\cdots x_{n-1})
\]
where $f_{x _i|x_1,\ldots,x_{i-1}}(x_i|x_1,\ldots,x_{i-1})$ are the conditional
densities. The conditional density is defined as the ratio of marginal densities
\[
f_{x _i|x_1,\ldots, x_{i-1}}(x_i|x_1,\ldots,x_{i-1}) =
\frac{f_{x _1,\ldots,x _{i}}(x_1,\ldots,x_{i})}
{f_{x _1,\ldots,x _{i-1}}(x_1,\ldots,x_{i-1})},
\]
which, in turn, are
given by
\[
f_{x _1,\ldots,x _{i}}(x_1,\ldots,x_{i}) =
\int \cdots \int f_x (x_1,\ldots,x_{n})d  x_{i+1} \cdots
d  x_{n}.
\]

Hence, a random vector $x $ with density $f_x (x)$ can 
be obtained by generating sequentially the $x _i$, $i=1,\ldots,n$,
where $x _i$ is distributed according to the univariate density
$f_{x _i|x_1,\ldots, x_{i-1}}(x_i)$.
The basic idea of this  method is to generate the first random variable
according to $f_{x _1}(x_1)$, then generate the next one conditional on
the first one, and so forth, thus reducing 
an $n$-dimensional generation problem to $n$ one-dimensional problems.
Note that, in the case of polynomial densities, the computation of the marginal densities 
is straightforward, thus making this method particularly appealing.

Moreover, to generate a random sample according to a given univariate polynomial density, the inversion method can be employed, see e.g. \cite[Corollary 14.1]{TeCaDa:13}. This is summarized in the following algorithm 
for the sake of completeness.

\textsc{Algorithm 2: Generation from a univariate polynomial density}
\begin{enumerate}
\item Generate a random variable $w$ uniform on $[p(a_{i}),\,p(b_{i})]$.
\item Compute the unique root $y$ in $[a_{i},b_{i}]$ of the polynomial
\[
y \mapsto \sum_{k=0}^n \frac{a_k}{k+1} y^{k+1}-x.
\]
\item Return $y$.
\end{enumerate}
\textsc{End of Algorithm 2}

This algorithm returns a random variable $x$ distributed
according to the univariate  density proportional to the polynomial $p_x(x)=\sum_{k=0}^n a_k
x^k$ with support $[a_{i},b_{i}]$.

In step 2, the
numerical computation of the root can be performed, up to a given
accuracy, using some standard method such as bisection
or Newton--Raphson. We also remark that more efficient methods
for generating samples from polynomial densities exist,
see for instance the method in~\cite{AhrDie:74}, based on
finite mixtures.

\section{Numerical examples} \label{sec:examples}

\subsection{Approximation of stability region}

As a first illustration of the ideas described in this paper,
we consider the outer approximations by polynomial
super-level sets of the third-degree discrete-time
stability region obtained by solving the convex
optimization problem in Step 1 of Algorithm 1.
Tightness of these approximations is crucial for
a good performance of the sample generation method.

A third-degree monic polynomial $z \in {\mathbb C} \mapsto x_1+x_2z+x_3z^2+z^3$
with real coefficients is discrete-time stable (i.e. all its roots lie in the interior of the unit disk
of the complex plane) if and only if its coefficient vector $x=(x_1,x_2,x_3)$ belongs
to the interior of the basic semialgebraic set
\[
\begin{array}{rcl}
\K = \{x \in {\mathbb R}^3 & : & g_1(x)=3-3x_1-x_2+x_3 \geq 0, \\
&& g_2(x) = 1-x_1+x_2-x_3 \geq 0,\\
&& g_3(x) = 1-x_2-x_1^2+x_1x_3 \geq 0\}
\end{array}
\]
which is nonconvex, see e.g. \cite[Example 4.3]{HeLa:12}.
Set $\K$ in included in the bounding box $\B = [-1,3]\times[-3,3]\times[-1,1]$.
In Figure \ref{fig:schur3outer4} we represent the super-level set $\U(p_4)=\{x : p_4(x)\geq 1\}$
of the degree 4 polynomial solving optimization problem (\ref{l1p}) with
the polynomial positivity constraints replaced with polynomial SOS constraints.
The set $\U(p_4)$ is a guaranteed outer approximation of $\K$.
In Figure \ref{fig:schur3outer8} we represent the much tighter degree 8
outer approximation $\U(p_8) \supset K$.

 \begin{minipage}{\linewidth}
 \centering
 \begin{minipage}{0.45\linewidth}
 \begin{figure}[H]
\includegraphics[width=\textwidth]{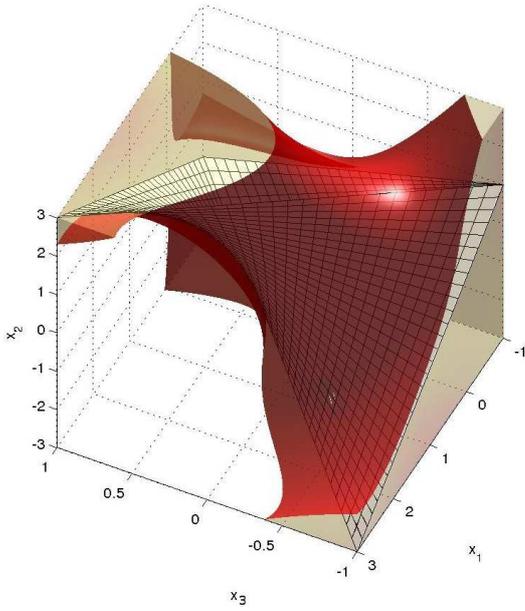}
\caption{Degree 4 outer polynomial approximation (boundary in red, interior in light red)
of the third degree discrete-time stability region (inner volume in white).\label{fig:schur3outer4}}
\end{figure}
\end{minipage}
\hspace{0.05\linewidth}
\begin{minipage}{0.45\linewidth}
\begin{figure}[H]
\includegraphics[width=\textwidth]{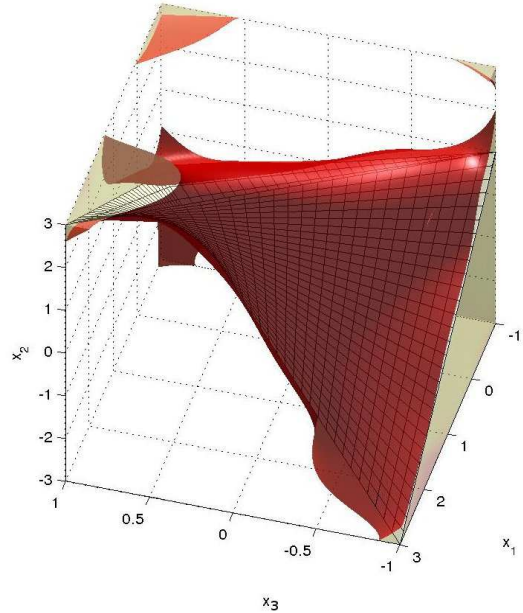}
\caption{Degree 8 outer polynomial approximation  (boundary in red, interior in light red)
of the third degree discrete-time stability region (inner volume in white).\label{fig:schur3outer8}}
\end{figure}
\end{minipage}
\end{minipage}


\subsection{Approximation of stabilizability region}

As another control-oriented illustration of the polynomial super-level
set approximation used by our sampling Algorithms 1 and 2, consider
\cite[Example 4.4]{HeLa:12} which is a degree 4
discrete-time polynomial $z \in {\mathbb C} \mapsto 
x_2+2x_1z-(2x_1+x_2)z^3+z^4$
to be stabilized with two real control parameters $x_1, x_2$.
In other words, we are interested in sampling uniformly in the set $\K$
of values of $(x_1,x_2)$ such that this polynomial has its roots with
modulus less than one. An explicit basic semialgebraic description
of the sampling set is
\[\small
\begin{array}{l}
\K = \{x \in {\mathbb R}^3 \: :\: g_1(x)=1+2x_2\geq 0, \\
\quad g_2(x) =2-4x_1-3x_2 \geq 0,\\
\quad g_3(x) =10-28x_1-5x_2-24x_1x_2-18x^2_2 \geq 0,\\
\quad g_4(x) = 1-x_2-8x_1^2-2x_1x_2-x_2^2-8x_1^2x_2-6x_1x_2^2 \geq 0\}.
\end{array}
\]
This set is nonconvex and it is included in the box
$\B=[-1,1]^2$. In Figure \ref{fig:schur4} we represent the graph of the
degree 10 polynomial $p_{10}(x)$ constructed by solving optimization problem (\ref{l1p}) with
the polynomial positivity constraints replaced with polynomial SOS constraints.
From this we get the outer approximation $\K \subset \U(p_{10})=\{x \in {\mathbb R}^2 \: : \: p_{10}(x) \geq 1\} $
used in our sampling algorithm.
\begin{figure}[!ht]
\centerline{
\includegraphics[width=.6\textwidth]{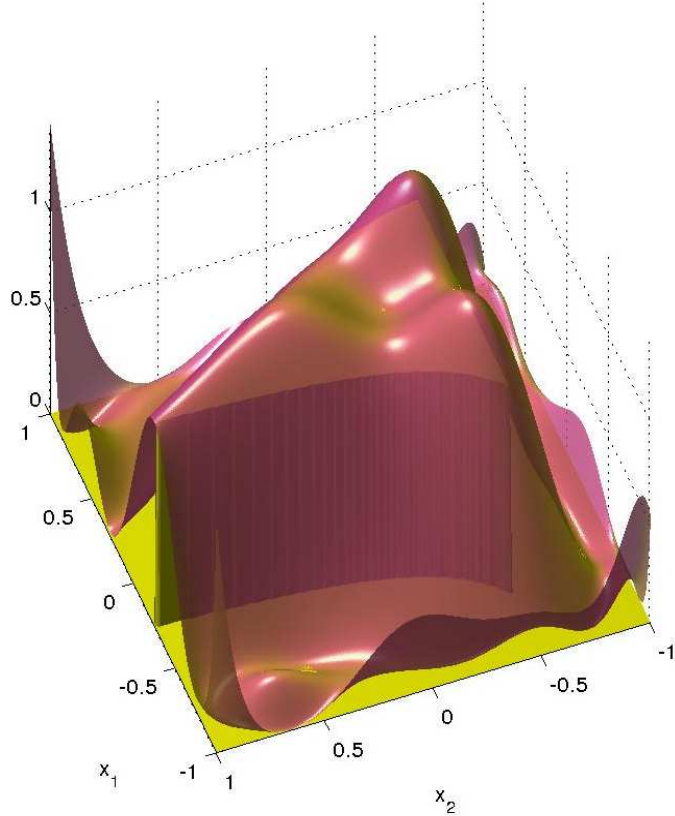}
}
\caption{Degree 10 polynomial approximation (surface in pink)
of the indicator function of the nonconvex planar stabilizability
region.\label{fig:schur4}}
\end{figure}

\subsection{Sampling in a nonconvex semialgebraic set}

To demonstrate the behavior of Algorithms 1 and 2,  we revisit a numerical example originally introduced in \cite{CePiRe:12}.
The considered semialgebraic set $\K$ is the two-dimensional nonconvex region described as:
\begin{eqnarray}
\label{K-es}
\lefteqn{\K:= 
\left\{(x_{1},x_{2})\in\mathbb{R}^{2}\, :\quad\right.}\\
&&\left.
(x_{1} -1)^{2} +(x_{2} -1)^{2} \le 1,\:\: x_{2}  \le 0.5 x^2_{1} \right\}.
\nonumber
\end{eqnarray}
In \cite{CePiRe:12}, the outer-bounding box 
\[
\B = \left\{(x_{1} , x_{2} )\in\mathbb{R}^{2}\, :\, 0.46 \le
x_{1} \le  2.02,\:\: 0.03 \le x_{2} \le 1.64\right\}
\]
was considered. 
Figure \ref{fig:cerone8} shows a two-dimensional plot of the indicator function $\one_\K(x)$, and the corresponding optimal solution $p_{d}^{*}(x)$ for $d=8$. The results of Algorithm 1 are reported in Figure \ref{fig:cerone8samples}. The red points represent the points which have been discarded. To this regard, it is important to notice that also some point falling inside $\K$ has been rejected. 
This is fundamental to guarantee uniformity of the discarded points.
\begin{figure}[!ht]
\centerline{
\includegraphics[width=.6\textwidth]{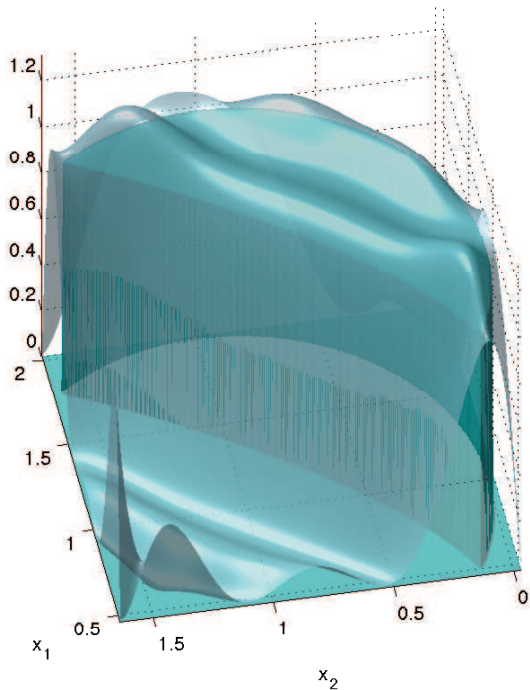}
}
\caption{Optimal polynomial approximation of degree 8 of the indicator function.\label{fig:cerone8}}
\end{figure}

\begin{figure}[!ht]
\centerline{
\includegraphics[width=.6\textwidth]{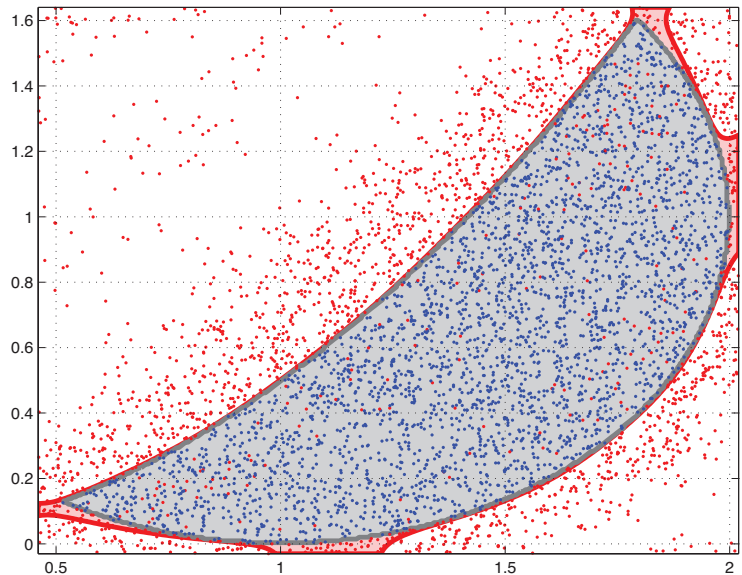}
}
\caption{Uniform random samples generated according to Algorithms 1 and 2.
The grey area is the set $\K$ defined in (\ref{K-es}), the pink area is the superlevel set 
$\U(p_d^*)$. The red dots are the discarded samples. The remaining samples (blue) are uniformly distributed inside $\K$.  \label{fig:cerone8samples}}
\end{figure}

\section{Concluding remarks} \label{sec:conclusion}
In this paper, a numerically efficient procedure for generating ``truly'' random samples inside a given
(possibly nonconvex) semialgebraic set $\K$ has been proposed. The algorithm is based on an acceptance/rejection
scheme constructed upon an optimal polynomial superlevel set guaranteed to contain $\K$. 
A key feature of the method is that, for a given set $\K$ and polynomial degree $d$, 
this approximation, which is undoubtedly the most time-consuming step of the sampling scheme,  
can be constructed \textit{a priori} and once for all, and then be used for online generation.
The rejection rate is shown to become asymptotically optimal when the degree of the polynomial approximation increases. 
Future work will concentrate on the application of this  to specific fixed-order controller design problems.

\section*{Acknowledgement}

This work was partially supported by a bilateral project CNR-CNRS.

\end{document}